\numberwithin{equation}{section}
\theoremstyle{plain}
\newtheorem{theorem}{Theorem}[section]
\newtheorem{corollary}[theorem]{\bf{Corollary}}
\date {}
\begin{document}

\title[Finite time blowup]{Finite time blowup of generalized Euler ODE in matrix geometry}

\author{Jiaojiao Li and Li Ma}

\address{Jiaojiao Li, Department of mathematics \\
Henan Normal university \\
Xinxiang, 453007 \\
China}

\email{lijiaojiao8219@163.com}

\address{Li Ma, Department of mathematics \\
Henan Normal university \\
Xinxiang, 453007 \\
China}

\email{lma@tsinghua.edu.cn}

\thanks{ The research is partially supported by the National Natural Science
Foundation of China (No.11271111)}

\begin{abstract}
In this paper, we study the finite time blowup of the generalized Euler ODE in the matrix geometry. We can extend Sullivan's result which is about the finite time blowup result of initial invertible linear operators to singular linear operators. we can give a complete answer to the question of Sullivan in the case when the initial matrix A is symmetric in the finite dimensional vector space W. Some open questions are proposed in the last part of the paper.

{ \textbf{Mathematics Subject Classification 2000}: 34D05}

{ \textbf{Keywords}: Euler ODE, finite time blowup,
matrix geometry}
\end{abstract}

\maketitle

\section{main result and its proof} \label{sect1}

Motivated by the Euler ODE of incompressible frictionless fluid dynamics expressed in
terms of the vorticity variable, D.Sullivan \cite{S} has considered the evolution of vorticity described by an ODE $dX/dt=Q(X)$, where $Q:V\to V$ is a quadratic mapping from the (infinite dimensional) vector space $V$ and vorticity variable $X=X(t)\in V$. Assuming that $V=End(W)$ for some other linear space $W$, he can show that for each $Q$ outside
a proper algebraic subvariety of quadratic mappings from V to V , the ODE
$dX/dt = Q(X) $ exhibits finite time blowup for some initial condition.
In particular, he can show the finite time blowup for the system
\begin{equation}\label{euler}
dX/dt=X.X
\end{equation}
with the initial linear operator $A$ at $t=0$ being invertible. Namely, the solution is given by
$$
X(t)=-Id/(tId-Id/A),
$$
which blows up at a finite time iff the spectrum of $A$ contains a real number. Here $Id\in V=End(W)$ is the identity map.
In the last sentence of Sullivan's paper, he invites people consider the interesting question about how likely is it in the variable A for the ODE (\ref{euler}) to have finite time blowup. We point out that the invertible condition in Sullivan's result for (\ref{euler}) can be removed by defining the solution by the expression
\begin{equation}\label{sullivan}
X(t)=A/(Id-tA),
\end{equation}
where $A$ is the initial linear operator. Then the solution blows up at a finite time iff the spectrum of $A$ contains a non-zero real number.
Then, unless $W$ is an odd dimensional vector space for an open set of initial conditions
there is a solution for all time and for another open set of initial conditions there
is finite time blowup. This phenomenon is similar to the invariant sets defined by the double well potential method for the classical parabolic PDE \cite{M}.

Given a linear operator $B\in V=End(W)$. We consider the generalized Euler
ODE systems
\begin{equation}\label{li1}
dX/dt=B.X.X
\end{equation}
and
\begin{equation}\label{li2}
dX/dt=X.B.X.
\end{equation}

It is quite clear for us that the solutions to (\ref{li1}) and (\ref{li2})
can be given by the expression
\begin{equation}\label{sullivan2}
X(t)=A/(Id-tAB),
\end{equation}
provided the initial linear operator $A$ commutes with the linear operator $B$.

We notice that in the finite dimensional matrix geometry, we can give the precise blowup time for some initial conditions.

Our main result is the following.

\begin{theorem} If the initial linear operator $A$ satisfying $AB=BA$ for the ODE systems (\ref{li1}) and (\ref{li2}), then the solution with initial condition $A$ can be given by (\ref{sullivan2}) and it blows up at a finite time iff the spectrum of $A$ contains a non-zero real number. In the case when $W$ is a finite dimensional vector space with inner product $<.,.>$ and $A$ and $B$ are symmetric matrices with $AB=BA$, the blowup time of the solution is determined by the number
$$
T=(\max_{\{x; <x,x>=1\}}<ABx,x>)^{-1}
$$
and when $T\leq 0$, the solution is eternal.
\end{theorem}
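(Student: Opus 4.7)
The plan is to treat the two assertions of the theorem in sequence, using the explicit formula (\ref{sullivan2}) as a template.

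For the first assertion, I would verify directly that $X(t)=A(Id-tAB)^{-1}$ is a solution of both (\ref{li1}) and (\ref{li2}) with $X(0)=A$. The hypothesis $AB=BA$ is crucial throughout: it guarantees that $A$, $B$, $AB$, and the resolvent $(Id-tAB)^{-1}$ all mutually commute, so factors may be reordered freely in the product. Differentiating with the identity
$$
\frac{d}{dt}(Id-tAB)^{-1}=(Id-tAB)^{-1}(AB)(Id-tAB)^{-1},
$$
I obtain $dX/dt=AB(Id-tAB)^{-2}A$, and direct commutations show that this expression equals both $BX\cdot X$ and $X\cdot B\cdot X$; hence (\ref{li1}) and (\ref{li2}) are simultaneously satisfied, and standard ODE uniqueness then pins down $X(t)$ as the unique solution with initial value $A$. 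For the blowup criterion, $X(t)$ is defined and smooth exactly where $Id-tAB$ is invertible, and it breaks down at the smallest positive $t$ for which $1/t$ lies in the spectrum of $AB$. The commutativity hypothesis further ensures that any eigenvector of $AB$ with nonzero eigenvalue is not killed by $A$ (since $BAv=ABv=\lambda v\neq 0$ forces $Av\neq 0$), so no genuine resolvent singularity is masked by a kernel of $A$.

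For the second assertion, I would use that two symmetric commuting matrices on a finite-dimensional inner-product space $W$ admit a common orthonormal eigenbasis. In such a basis, $AB$ is diagonal with real entries, hence itself symmetric, and its largest eigenvalue coincides with the Rayleigh-quotient maximum $\lambda_{\max}=\max_{<x,x>=1}<ABx,x>$. The smallest positive time at which $Id-tAB$ fails to be invertible is therefore $T=1/\lambda_{\max}$. When $\lambda_{\max}\leq 0$, no positive singular time exists and the solution persists for all $t\geq 0$, which is precisely the eternal regime $T\leq 0$ announced in the theorem.

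The step I expect to require the most care is the bookkeeping around the spectral criterion in the general (not necessarily symmetric) part of the statement: the formula (\ref{sullivan2}) is naturally controlled by the spectrum of $AB$, whereas the theorem phrases the blowup dichotomy in terms of the spectrum of $A$, and one must reconcile the two using the commutativity hypothesis together with the fact that for commuting operators on a common invariant decomposition the eigenvalues of $AB$ are products of corresponding eigenvalues of $A$ and $B$. All remaining steps are routine algebraic manipulations that rely only on the commutativity of $A$ and $B$ and on the simultaneous spectral theorem for commuting symmetric matrices.
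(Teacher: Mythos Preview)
Your proposal is correct and follows essentially the same route as the paper: verify the explicit formula (\ref{sullivan2}) using commutativity, identify the blowup time with the reciprocal of the largest eigenvalue of $AB$ via the Rayleigh-quotient/Courant characterization, and rule out the possibility that the resolvent singularity is cancelled by the factor $A$ via the observation that $ABw\neq 0$ forces $Aw\neq 0$. The only cosmetic difference is that you invoke simultaneous diagonalization explicitly whereas the paper cites Courant's minimax principle and phrases the non-cancellation step as a short contradiction ($Aw=X(T)(Id-TAB)w=0$ versus $BAw\neq 0$); your flag about the mismatch between ``spectrum of $A$'' in the statement and ``spectrum of $AB$'' in the formula is well taken and is not addressed in the paper's proof either.
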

\begin{proof} The first part has been proven as above. We need only to prove the last part of the result. We may only consider the case when $T>0$ and study the forward flow $X(t)$ (otherwise we consider backward flow). By Courant's minimax variational principle, we know that there is a unit vector $w\in W$ such that $ABw=T^{-1} w$.
It is clear that the solution is given by (\ref{sullivan2}) before the time $T$.

If the solution $X(t)$ exists at time $t=T$, then
$$
Aw=X(T)(Id-TAB)w=0,
$$
which is absurd with the fact $T\not=0$ (which implies that $BAw\not=0$).
This completes the proof of the theorem.
\end{proof}

In the special case when $B=Id$, we have
\begin{corollary}
When the initial matrix $A$ of the ODE (\ref{li1}) or(\ref{li2}) is symmetric in the finite dimensional vector space $W$, the solution with initial condition $A$ can be given by (\ref{sullivan2}) and it blows up at a finite time iff the spectrum of $A$ contains a non-zero real number.
\end{corollary}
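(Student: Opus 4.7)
The plan is to treat this corollary as the specialization of the main theorem to the case $B=Id$. Since $Id$ commutes with every operator, the hypothesis $AB=BA$ is automatic, and both (\ref{li1}) and (\ref{li2}) collapse to $dX/dt = X \cdot X$. The representation (\ref{sullivan2}) then reduces to $X(t) = A/(Id - tA)$, which is precisely Sullivan's formula (\ref{sullivan}); this gives the first claim of the corollary with no further work.

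For the blowup criterion, I would invoke the quantitative half of the theorem directly. Because $A$ is symmetric and $B=Id$ is symmetric with $AB=BA$, the theorem yields the forward blowup time
$$
T_{+} = \Bigl(\max_{\langle x,x\rangle=1} \langle Ax,x\rangle\Bigr)^{-1},
$$
which is finite and positive exactly when $A$ has a strictly positive eigenvalue, since on a symmetric operator the maximum of the Rayleigh quotient equals the largest eigenvalue. Applying the same argument to the backward flow, or equivalently to the symmetric operator $-A$, shows that a finite backward blowup time occurs precisely when $A$ has a strictly negative eigenvalue.

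Finally, since $A$ is symmetric its entire spectrum lies in $\mathbb{R}$, so the condition "the spectrum of $A$ contains a non-zero real number" is equivalent to "$A$ has at least one non-zero eigenvalue". By the previous paragraph this happens if and only if the solution blows up at some finite forward or backward time. The only mild subtlety is the convention that "blows up at a finite time" is allowed in either time direction; once that is granted, the corollary is an immediate transcription of the main theorem. The main obstacle, such as it is, is verifying the extremal characterization of the largest and smallest eigenvalues of a symmetric operator, which is a routine application of Courant's minimax principle already used in the proof of the theorem.
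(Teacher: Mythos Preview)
Your proposal is correct and follows the same overall approach as the paper: specialize the main theorem to $B = Id$. The paper's own justification is even terser---it simply prefaces the corollary with ``In the special case when $B=Id$, we have''---because the blowup criterion ``iff the spectrum of $A$ contains a non-zero real number'' is already part of the first conclusion of the theorem (with $B=Id$ one has $AB=A$), so there is no need to pass through the quantitative blowup-time formula and the Rayleigh-quotient/backward-flow argument you outline. Your route through the second half of the theorem is still valid and has the minor advantage of making explicit why, for symmetric $A$, the criterion is equivalent to $A\neq 0$; but for the corollary as stated, quoting the first half of the theorem already suffices.
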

Thus we can give a complete answer to the question of Sullivan in the case when the initial matrix $A$ is symmetric on the finite dimensional vector space $W$.

We remark that similar result holds true for the ODE system
$$
dX/dt=X.X.B.
$$
We leave the case when $BA\not=AB$ as an open question.

\section{Discussions and open questions} \label{sect2}

The method above may not be useful in the understanding of the ODE system
$$
dX/dt=|X|X
$$
or the system
\begin{equation}\label{jiao}
dX/dt=[B,X]X
\end{equation}
where $|X|$ is the norm of the linear operator $X$ and $[B,X]=B.X-X.B$ is the Lie bracket.
However, the method above could be useful in the study of many ODE systems of the form
\begin{equation}\label{euler2}
dX/dt=F(X),
\end{equation}
with the initial condition $A$ at $t=0$, where $F(X)$ is an analytic matrix function of the linear operator $X$. As above we may define the solution in the form
$$
X(t)=A/(Id-G(t,A))
$$
for some analytic matrix function $G(t,x)$ with $G(0,X)=0$. One can define
$$
\lambda (t,A)=\max_{\{<x,x>=1\}}<G(t,A)x,x>.
$$
and $\lambda (t,A)$ is a continuous function in $t$ with $\lambda (0,A)=0$. If there is a minimum $T>0$ such that
$$
\lambda (T,A)=1,
$$
one may show that $T$ is the finite blowup time for the ODE system (\ref{euler2}). We leave this as an open question.
It is also a very interesting question to understand the ODE system (\ref{jiao}). Another interesting topic is to know the finite time blowup or global behavior of solutions of the nonlinear "parabolic" model
$$
u_t=-\Delta u+u.u, \; \; u=u(t)\in M_n,
$$
with initial data $u(0)=u_0\in M_n$, where the Laplacian operator $\Delta$ on the matrix algebra $M_n$ is defined in \cite{L}.

\end{document}